\newtheorem{theorem}{Theorem}[section]
\newtheorem{lemma}[theorem]{Lemma}
\newtheorem{proposition}[theorem]{Proposition}
\theoremstyle{definition}
\newtheorem{remark}[theorem]{Remark}
\newtheorem{question}[theorem]{Question}
\newtheorem{example}[theorem]{Example}
\def\N{\mathbb{N}}
\def\R{\mathbb{R}}
\def\Z{\mathbb{Z}}
\def\Q{\mathbb{Q}}
\def\RP {\mathbb{R} \mathrm{P}}
\def\CP {\mathbb{C} \mathrm{P}}
\def\CC {\mathcal{C}}
\def\KK {\mathcal{K}}
\def\SS {\mathcal{S}}
\newcommand{\abs}[1] {\left\lvert #1 \right\rvert}
\def\co{\colon\thinspace}
 \DeclareMathOperator{\intt}{int}
\def\conn{\mathbin{\#}}
\def\bconn{\mathbin{\natural}}
\renewcommand{\MR}[1]{}
\definecolor{darkgreen}{rgb}{0,0.5,0}
\definecolor{purple}{rgb}{0.5,0,0.5}
\def\CP{\mathbb{C} \mathrm{P}}
\def\CPbar{\overline{\mathbb{C} \mathrm{P}}{}}
\numberwithin{equation}{section}
\begin{document}

\title{A note on rationally slice knots}
\author{Adam Simon Levine}
\address{Department of Mathematics, Duke University, Durham, NC 27708}
\email{alevine@math.duke.edu}
\thanks{The author was supported by NSF grant DMS-2203860.}

\begin{abstract}
Kawauchi proved that every strongly negative amphichiral knot $K \subset S^3$ bounds a smoothly embedded disk in some rational homology ball $V_K$, whose construction \emph{a priori} depends on $K$. We show that $V_K$ is independent of $K$ up to diffeomorphism. Thus, a single 4-manifold, along with connected sums thereof, accounts for all known examples of knots that are rationally slice but not slice.
\end{abstract}

\maketitle

\section{Introduction} \label{sec: intro}

Let $K$ be a knot in $S^3$. If $X$ is a smooth, compact, oriented $4$-manifold with boundary $S^3$, we say that $K$ is \emph{slice in $X$} if there exists a smoothly embedded disk $D$ in $X$ with boundary equal to $K$. Note that if $K$ is slice in $X$, then so is any knot that is smoothly concordant to $K$.

For a commutative ring $R$ with unit, we say that $K$ is \emph{$R$-slice} if it is slice in some $4$-manifold $X$ that is an $R$-homology $4$-ball. We will focus on the cases of $R = \Z$, $\Q$, and $\Z_p$ (for $p$ prime). Note that a $\Z_p$-homology $4$-ball $X$ is the same as a $\Q$-homology $4$-ball with the additional property that $\abs{H_1(X;\Z)}$ is not divisible by $p$. We use \emph{rationally slice} as a synonym for $\Q$-slice.\footnote{Some authors, e.g. Kawauchi \cite{KawauchiRational}, impose an additional homological constraint in the definition of \emph{rationally slice}, and use \emph{weakly rationally slice} for the definition we are using. Our terminology agrees with that of other recent papers on the subject, e.g. \cite{KimWuRational, HomKangParkStoffregen}, which use \emph{strongly rationally slice} for Kawauchi's version.}

By a slight abuse of notation, if $Z$ is a closed $4$-manifold and $K$ is slice in $Z - B^4$, we also say that $K$ is slice in $Z$. If $X = Z - B^4$, then $X$ is an $R$-homology $4$-ball if and only if $Z$ is an $R$-homology $4$-sphere.

Let $\CC$ denote the smooth concordance group, and let $\KK_R$ denote the subgroup of $\CC$ consisting of concordance classes of knots that are $R$-slice. In other words, $\KK_R$ is the kernel of the forgetful map $\CC \to \CC_R$, where $\CC_R$ is the group of knots in $S^3$ up to concordance in $R$-homology cobordisms. 

It remains an open question whether there exist knots that are $\Z$-slice but not slice, i.e. whether $\KK_\Z \ne 0$. In contrast, it is well-known that there exist knots that are $\Q$-slice but not slice (or even $\Z$-slice), such as the figure-eight knot. Specifically, a knot $K \subset S^3$ is called \emph{strongly negative amphichiral} if there exists an orientation-reversing involution $\phi \co S^3 \to S^3$ preserving $K$ setwise and having exactly two fixed points, both lying on $K$. Following terminology of Keegan Boyle, we refer to a \textbf{s}trongly \textbf{n}egatively \textbf{a}mphi\textbf{c}hiral \textbf{k}not as a SNACK. Note that every SNACK represents a class of order at most 2 in $\CC$. Kawauchi \cite{KawauchiFigureEight, KawauchiRational} showed that every SNACK is $\Q$-slice; more precisely, he proved that every SNACK $K$ is slice in a certain rational homology $4$-ball $V_K$, whose construction \emph{a priori} depends on $K$.

The main theorem of this note is that $V_K$ is in fact independent of $K$ up to diffeomorphism; that is, all SNACKs are slice in the same rational homology 4-ball. We may describe the manifold explicitly as follows. Let $\tau \co S^2 \times S^2 \to S^2 \times S^2$ be the map $\tau(x,y) = (r(x), -y)$, where $r \co S^2 \to S^2$ is a reflection. This map is an orientation-preserving involution with no fixed points, so the quotient $Z_0 = S^2 \times S^2 / \tau$ is a closed, orientable manifold. Some elementary algebraic topology (see Lemma \ref{lemma: ZK-algtop} below) shows that $Z_0$ is a rational homology 4-sphere with $\pi_1(Z_0) \cong H_1(Z_0) \cong H_2(Z_0) \cong \Z_2$. Thus, for every odd prime $p$, $Z_0$ is a $\Z_p$-homology sphere. Note that the map $(x,y) \mapsto (x,-y)$ induces an orientation-reversing involution on $Z_0$.

In Section \ref{sec: kawauchi}, we will prove:

\begin{theorem} \label{thm: main}
For every strongly negative amphichiral knot $K \subset S^3$, Kawauchi's manifold $V_K$ is diffeomorphic to $Z_0 - B^4$. Thus, every SNACK is slice in $Z_0 - B^4$.
\end{theorem}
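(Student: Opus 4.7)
The plan is to prove Theorem \ref{thm: main} in two steps: first, to show that Kawauchi's $V_K$ depends only on the equivariant diffeomorphism class of the pair $(S^3,\phi)$ and hence, by classification of SNACK involutions, is independent of $K$; and second, to identify this universal manifold with $Z_0 \minus B^4$ via the standard linear model.

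For the first step, I would revisit Kawauchi's construction and verify that it assembles $V_K$ functorially from the pair $(S^3,\phi)$: the knot $K$ enters only in picking out the slice disk inside $V_K$, not in defining the ambient $4$-manifold. I would then invoke the classification statement that every orientation-reversing smooth involution of $S^3$ with exactly two isolated fixed points is smoothly conjugate to the standard linear model $\phi_0(x_1,x_2,x_3,x_4) = (-x_1,-x_2,-x_3,x_4)$ on $S^3 \subset \R^4$. This can be extracted from the classification of smooth finite group actions on $S^3$ (Livesay, Tollefson), using that the only orientation-reversing linear involution of $\R^3$ with an isolated fixed point is $-\id$, which pins down the local models at both fixed points. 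Combined with the functoriality observation, this gives $V_K \cong V_{\phi_0}$ for every SNACK $K$.

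For the second step, I would compute $V_{\phi_0}$ explicitly. Parametrizing $S^3 \subset \R \times \R^3$ so that $\phi_0$ acts trivially on the $\R$-axis and antipodally on each $S^2$-slice, Kawauchi's construction is the quotient of $S^3 \times [-1,1]$ by the involution $(x,s) \mapsto (\phi_0(x), -s)$, with its two singular points resolved in a standard way. I would then identify this $4$-manifold with $Z_0 \minus B^4$ by passing to double covers: demonstrate that the connected double cover $\widetilde V_{\phi_0}$ (corresponding to $\pi_1(V_{\phi_0}) \cong \Z_2$, which follows from the construction together with Lemma \ref{lemma: ZK-algtop}) is diffeomorphic to $S^2 \times S^2 \minus (B^4 \sqcup B^4)$, with the deck transformation corresponding precisely to $\tau$. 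This exhibits $V_{\phi_0}$ as the $\tau$-quotient of $S^2 \times S^2 \minus (B^4 \sqcup B^4)$, which is $Z_0 \minus B^4$.

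The main obstacle is this final identification: constructing a concrete diffeomorphism between two $4$-manifolds defined by different quotient constructions. The most efficient route is likely to compare handle decompositions: show that both $V_{\phi_0}$ and $Z_0 \minus B^4$ admit handle structures with matching attaching data (inherited from the $\tau$-equivariant geometry of $S^2 \times S^2$), using Kirby calculus to move between presentations if necessary. Equivalently, one can build the $\tau$-equivariant diffeomorphism directly on the level of covers by matching the suspension decomposition of $S^3$ with the product decomposition of $S^2 \times S^2$ along one of its factors.
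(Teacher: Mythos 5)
The first step of your proposal contains a fatal gap: Kawauchi's manifold $V_K$ is \emph{not} assembled from the pair $(S^3,\phi)$ alone. The construction starts from the $0$-surgery $S^3_0(K)$ (equivalently, the $0$-trace $X_K$), forms the quotient $S^3_0(K)\times[0,1]/{\sim}$ with $(x,0)\sim(\phi_K(x),0)$, and caps off with a $2$-handle along the meridian; the knot enters at the very first stage, in defining the ambient manifold, not merely in selecting a slice disk inside it. Since $S^3_0(K)\cong S^2\times S^1$ only for $K=O$ (Property R), the input manifolds are genuinely different for different SNACKs, and the assertion that $V_K$ depends only on $(S^3,\phi)$ ``by functoriality'' is precisely the content of the theorem, not something one can read off from the construction. (Your appeal to the classification of involutions of $S^3$ is fine --- the paper likewise assumes, up to equivariant isotopy, that $\phi$ is the standard linear involution --- but that normalizes the symmetry, not the knot.) Relatedly, your description of $V_{\phi_0}$ as a quotient of $S^3\times[-1,1]$ with ``two singular points resolved in a standard way'' is not Kawauchi's construction and is not well defined as stated: that quotient has two cone points modelled on the cone over $\RP^3$, and there is no canonical resolution.

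The paper closes exactly this gap with a $5$-dimensional argument. It forms $Q_K = X_K\times[-1,1]$, whose boundary is the double $DX_K\cong S^2\times S^2$ equipped with the deck transformation $\tau_K$ of the double cover $DX_K\to Z_K$. A theorem of Boyle and Chen provides an equivariant homotopy from $K$ to $O$ in $S^3$ that is an isotopy except for finitely many pairs of simultaneous crossing changes; pushing into the fifth coordinate upgrades this to an equivariant isotopy of the attaching circles in $S^4$, whence (after a framing check) $(Q_K,\tilde\tau_K)\cong(Q_O,\tilde\tau_O)$ and, on the boundary, $(DX_K,\tau_K)\cong(DX_O,\tau_O)$, giving $Z_K\cong Z_O=Z_0$. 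Some such input about equivariant unknotting one dimension higher (or an equivalent device) is unavoidable here, and your proposal has no substitute for it; the final identification $Z_O\cong Z_0$, which is where your effort is concentrated, is the easy part and is handled in the paper by a direct computation with $X_O=S^2\times D^2$.
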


\begin{figure}
\labellist
 \pinlabel $2$ [b] at 16 58
 \pinlabel $0$ [bl] at 49 65
 \pinlabel {$\cup \, h_3, h_4$} [l] at 86 32
\endlabellist
\includegraphics{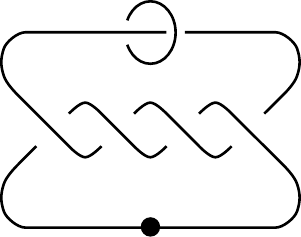}
\caption{Kirby diagram for $Z_0$.} \label{fig: Z0-kirby}
\end{figure}

\begin{remark}
For another characterization of $Z_0$, consider the map $q\co S^2 \times S^2 \to \RP^2$ taking $(x,y)$ to the class of $y$. Then $q \circ \tau = q$, so $q$ descends to a map $\bar q \co Z_0 \to \RP^2$, which gives $Z_0$ the structure of an $S^2$-bundle over $\RP^2$. If $x \in S^2$ is any fixed point of the reflection $r$, we obtain a section $\sigma_x \co \RP^2 \to Z_0$ by defining $\sigma_x([y]) = [(x,y)]$ for each $y \in S^2$. Since the fixed-point set of $r$ is a circle, we in fact find a $1$-dimensional family of nearby disjoint sections. The manifold $Z_0$ is thus characterized by being the unique $S^2$ bundle over $\RP^2$ with orientable total space and a section of self-intersection $0$. (See \cite[p.~237]{HillmanGeometries} for further discussion of $S^2$-bundles over $\RP^2$.)

We claim that $Z_0$ is represented by the handle diagram in Figure \ref{fig: Z0-kirby} (using dotted $1$-handle notation). As seen in \cite[Figure 6.2]{GompfStipsicz}, the $0$-handle, $1$-handle, and $2$-framed $2$-handle from the figure produce the $D^2$-bundle over $\RP^2$ with orientable total space and Euler number $0$. The double of that $D^2$-bundle is the $S^2$-bundle described above, which is $Z_0$. We obtain the double by adding a $0$-framed $2$-handle along the meridian of the first $2$-handle, and then a $3$-handle and $4$-handle, which yields Figure \ref{fig: Z0-kirby}.
\end{remark}

Before we turn to the proof of Theorem \ref{thm: main}, we discuss its implications for the study of rationally slice knots, albeit with more questions than answers. Surprisingly, Kawauchi's construction actually accounts for all known examples of knots that are $\Q$-slice but not slice, that is, all known elements of $\KK_\Q$. We make this explicit as follows.

First, note that if $K$ and $K'$ are knots, and if $K$ is slice in a 4-manifold $X$ and $K'$ is slice in $X'$, then $K\conn K'$ is slice in $X \bconn X'$, and $-K$ is slice in $\overline{X}$ (i.e. $X$ with reversed orientation). Let $\SS$ denote the set of concordance classes of knots that are slice in $\bconn n (Z_0-B^4)$ for some $n \in \N$ (or equivalently in $\conn n Z_0)$. Because $Z_0 \cong \overline{Z_0}$ as oriented manifolds, we thus see that $\SS$ is a subgroup of $\CC$ and is contained in $\KK_\Q$. Indeed, for every odd prime $p$, we have $\SS \subset \KK_{\Z_p}$.

For any knots $P \subset S^1 \times D^2$ and $K \subset S^3$, let $P(K)$ denote the satellite knot with pattern $P$ and companion $K$ (i.e. the image of $P$ under the embedding $S^1 \times D^2 \to S^3$ determined by the 0-framing of $K$). The operation $K \mapsto P(K)$ descends to a function on $\CC$. If $P(O)$ is slice (where $O$ denotes the unknot\footnote{Many authors use $U$ to denote the unknot, but we prefer $O$ because of the obvious graphical similarity.}), we call $P$ a \emph{slice pattern}, and the operation $K \mapsto P(K)$ a \emph{slice satellite operation}. If a knot $K$ is slice in a particular $4$-manifold $X$, then so is $P(K)$ for any slice pattern $P$; thus, the subgroups $\KK_R$ (for any ring $R$) and $\SS$ are closed under slice satellite operations.

To the author's knowledge, the only known concordance classes of knots that are rationally slice but not slice (that is, nontrivial elements of $\KK_\Q$) arise from Kawauchi's construction, together with taking iterated slice satellite operations and/or connected sums, and thus they lie in $\SS$. We note several such constructions in the literature:

\begin{itemize}
\item
Cha \cite[Theorem 4.16]{ChaRational} exhibited an family of SNACKs that generate a $\Z_2^\infty$ subgroup of $\KK_\Q$. These knots can be distinguished up to concordance by their classes in the algebraic concordance group \cite{JLevineCobordism}. Subsequently, Hedden, Kim, and Livingston \cite{HeddenKimLivingston} found another such family of SNACKS with the additional property of being topologically slice (and hence algebraically slice). The proof that these knots fail to be slice relies on the Heegaard Floer $d$ invariants \cite{OSzAbsolute} of the knots' branched double covers.

\item
For a knot $K$ and relatively prime integers $m,n$, let $K_{m,n}$ denote the $(m,n)$ cable of $K$ (where $m$ denotes the winding number in the longitudinal direction and $n$ in the meridional direction). Then for any $m \in \Z$, the operation $K \mapsto K_{m,1}$ is a slice satellite operation. Let $F$ denote the figure-eight knot, which is strongly negative amphichiral and hence slice in $Z_0$. Hom, Kang, Park, and Stoffregen \cite{HomKangParkStoffregen} proved that the set of knots
\[
\{F_{2n-1, 1} \mid n \ge 2\}
\]
is linearly independent in $\CC$, and thus generates a $\Z^\infty$ subgroup of $\CC$ contained in $\KK_\Q$ (and indeed in $\SS$).\footnote{The result is stated in \cite{HomKangParkStoffregen} for $F_{2n-1,-1}$, but note that $F_{2n-1,-1}$ is the mirror of $F_{2n-1,1}$.} This result provided the first known non-torsion elements of $\KK_\Q$. For linear combinations consisting of more than one summand, the resulting knot is slice in some connected sum of copies of $Z_0$, but \emph{a priori} not necessarily slice in $Z_0$ itself. The proof makes use of concordance invariants coming from involutive knot Floer homology \cite{HendricksManolescuInvolutive}.

More recently, Dai, Kang, Mallick, Park, and Stoffregen \cite{DKMPSCable}, answering a long-standing question of Kawauchi \cite{KawauchiFigureEight}, proved that $F_{2,1}$ is not slice (and indeed generates a $\Z$ subgroup of $\SS$). This proof relies on using the involutive structure of the Heegaard Floer homology of the branched double cover and the action of the deck transformation.

\item
Kawauchi's result applies only to strongly negative amphichiral knots, but not necessarily to knots that are merely negative amphichiral (isotopic to their mirror reverses). However, Kim and Wu \cite{KimWuRational} proved that if $K$ is a fibered, negative amphichiral knot whose Alexander polynomial is irreducible, then $K$ is necessarily obtained from a SNACK by iterated slice satellite operations, and hence is rationally slice by Kawauchi's result. Again, any such knot must lie in $\SS$.
\end{itemize}

In some sense, Theorem \ref{thm: main} illustrates how little is known about rational concordance: a single $4$-manifold (along with connected sums of copies thereof) accounts for all known examples of knots that are rationally slice but not slice. That is, the following question is open:

\begin{question} \label{q: KQ-SZ0}
Is $\KK_\Q = \SS$? That is, is every rationally slice knot slice in a connected sum of copies of $Z_0$?
\end{question}

To try to answer Question \ref{q: KQ-SZ0} in the negative, it is instructive to consider not only $\Q$-concordance but also $\Z_p$-concordance. By the above discussion, all known elements of $\KK_\Q$ are contained in $\KK_{\Z_p}$ for every odd prime $p$. In contrast, the following question remains open:
\begin{question} \label{q: KZ2}
Is $\KK_{\Z_2} \ne 0$? That is, does there exist a knot $K \subset S^3$ that is slice in a $\Z_2$-homology ball but not slice?
\end{question}
In some sense, this question is nearly as difficult as that of the better-known problem of finding nontrivial elements of $\KK_\Z$. A large number of knot invariants, including Heegaard Floer invariants such as $\tau$ \cite{OSz4Genus} and $\Upsilon$ \cite{OSSzHomomorphisms}, necessarily vanish for all rationally slice knots. Most crucially, even the invariants used in the above-mentioned results, which can detect some nontrivial elements of $\KK_\Q$, are unable to obstruct a knot from being slice in a $\Z_2$-homology 4-ball. Namely, if a knot $K$ is $\Z_2$-slice, then:
\begin{itemize}
\item it is algebraically slice \cite[Theorem 3]{ChaLivingstonRuberman};

\item the slice obstructions from involutive knot Floer homology vanish \cite[Remark 1.8]{HomKangParkStoffregen}; and

\item the branched double cover of $K$ bounds a $\Z_2$-homology ball, and hence the obstructions from $d$ invariants and involutive Floer homology vanish \cite[Remark 5.4]{DKMPSCable}.
\end{itemize}
It remains unknown whether Rasmussen's $s$ invariant \cite{RasmussenGenus} (or any of its generalizations) vanishes for all rationally slice knots.

Nevertheless, here is one potential approach to Questions \ref{q: KQ-SZ0} and \ref{q: KZ2}. First, recall that if a knot $K$ is slice in a $\Z_p$-homology ball $X$, then for any power $p^k$, the $p^k$-fold cyclic branched cover of $X$ branched over the slice disk is again a $\Z_p$-homology ball whose boundary is $\Sigma_p(K)$. (On the other hand, if $H_1(X;\Z_p) \ne 0$, then this covering may not be a rational homology ball.) Thus, suppose one can find a knot $K$ that is slice in a $\Z_2$-homology $4$-ball $X$ that is not an integer homology ball, and choose any odd prime $p$ dividing $\abs{H_1(X;\Z)}$. If one can show that $\Sigma_{p^k}(K)$ does not bound any rational homology ball (using, say, $d$ invariants), it then follows that $K$ cannot be $\Z_p$-slice, and in particular it cannot be in $\SS$. This would thus resolve both Question \ref{q: KQ-SZ0} (in the negative) and Question \ref{q: KZ2} (in the affirmative).

\begin{figure}
\labellist
 \pinlabel $a$ [b] at 16 58
 \pinlabel $0$ [bl] at 49 65
 \pinlabel $n$ at 43 30
 \pinlabel {$\cup \, h_3, h_4$} [l] at 86 32
\endlabellist
\includegraphics{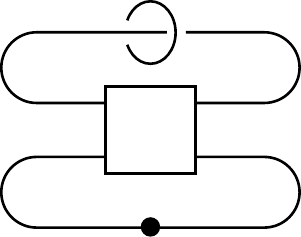}
\caption{Kirby diagram for $X_{n,a}$. The box indicates $n$ full positive twists.} \label{fig: Xna-kirby}
\end{figure}

Klug and Ruppik \cite[Corollary 2.5]{KlugRuppikDeep} proved that if $X$ is a closed $4$-manifold whose universal cover $\tilde X$ is $\R^4$ or $S^4$, then any knot that is slice in $X$ is slice. However, this is not an issue if $X$ is a rational homology $4$-sphere with finite (nontrivial) fundamental group; a simple Euler characteristic argument shows that the universal cover must have nontrivial $H_2$. For instance, for any $n \in \N$ and $a \in \Z$, let $X_{n,a}$ denote the closed 4-manifold indicated by the handle diagram in Figure \ref{fig: Xna-kirby}, generalizing $Z_0 = X_{2,0}$. It is easy to verify that $X_{n,a}$ is a rational homology $4$-sphere with $\pi_1(X_{n,a}) \cong H_1(X_{n,a}) \cong \Z/n$, essentially the simplest construction of a manifold with those properties. The diffeomorphism type of $X_{n,a}$ depends only on $n$ and the parity of $a$. Thus, it is natural to ask a more concrete version of Question \ref{q: KZ2}:
\begin{question}
For $n>2$ and $a \in \{0,1\}$, does there exist a non-slice knot $K \subset S^3$ that is slice in $X_{n,a} - B^4$?
\end{question}
We invite the reader to find a knot with the needed properties.

\section{Proof of Theorem \ref{thm: main}} \label{sec: kawauchi}

Throughout this section, let $K \subset S^3$ be a SNACK. Up to equivariant isotopy, we may assume that $K$ is fixed setwise by the map $\phi \co S^3 \to S^3$ that is the restriction to $S^3$ of the linear involution $\Phi \co \R^4 \to \R^4$ given by $\Phi(x_1, x_2, x_3, x_4) = (x_1, -x_2, -x_3, -x_4)$.

Let $r \co \R^2 \to \R^2$ denote the reflection $r(x_1,x_2) = (x_1,-x_2)$; we also denote its restrictions to $D^2$ and $S^1$ by the same symbol. 
Let $\psi_K \co S^1 \to S^3$ denote the inclusion of $K$, chosen to be equivariant with respect to the involutions $r$ on $S^1$ and $\phi$ on $S^3$. (In particular, $\psi_K$ takes $(\pm1, 0)$ to the two fixed points of $\phi$.) By the equivariant tubular neighborhood theorem (see, e.g., \cite[Theorem 4.4]{Kankaanrinta}), we may extend $\psi_K$ to an embedding $\Psi_K \co S^1 \times D^2 \hookrightarrow S^3$ that parametrizes an equivariant closed tubular neighborhood of $K$, with the following properties:
\begin{itemize}
  \item $\Psi_K$ restricts to $\psi_K$ on $S^1 \times \{\vec 0\}$.
  \item For any $(x, y) \in S^1 \times D^2$, we have $\phi \circ \Psi_K (x, y) = \Psi_K(r(x),-y)$. 
  \item $\Psi_K$ determines the $0$-framing of $K$; that is, for any nonzero $y \in D^2$,  $\psi_K(S^1 \times \{y\})$ has linking number $0$ with $K$.
\end{itemize}

Let $X_K$ denote the $0$-trace of $K$, obtained by attaching a $0$-framed $2$-handle $D^2 \times D^2$ to $D^4$ using the attaching map $\Psi_K$. This manifold acquires an orientation from that of $D^4$. The boundary of $X_K$ is the $0$-surgery $S^3_0(K)$. The involution $\Phi|_{D^4}$ extends to an orientation-reversing involution $\Phi_K \co X_K \to X_K$, defined on the $2$-handle $D^2 \times D^2$ by $\Phi_K(x, y) = (r(x), -y)$. The fixed point set of $\Phi_K$ is a circle, consisting of the arcs $[-1,1] \times \{\vec 0\} \subset D^4$ and $([-1,1] \times \{0\}) \times \{\vec 0\} \subset D^2 \times D^2$. In particular, observe that $\Phi_K$ restricts to a fixed-point-free, orientation-reversing involution of $S^3_0(K)$, which we denote by $\phi_K$.


We now describe Kawauchi's construction (in slightly different terms). Let $Z_K$ denote the quotient $X_K/{\sim}$, where for all $x \in S^3_0(K)$, we set $x \sim \phi_K(x)$. Let $\pi\co X_K \to Z_K$ denote the quotient map. That is, we obtain $Z_K$ by a self-gluing of the boundary of $X_K$. Because $\phi_K$ has no fixed points, $Z_K$ is a smooth, closed $4$-manifold, and because $\phi_K$ is orientation-reversing, $Z_K$ naturally acquires an orientation from that of $X_K$.

\begin{lemma} \label{lemma: ZK-slice}
The knot $K$ is slice in $Z_K$.
\end{lemma}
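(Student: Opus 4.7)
The plan is to exhibit the slice disk directly from the handle decomposition of the $0$-trace, using the fact that the self-identification defining $Z_K$ takes place only on $\partial X_K$. Write $X_K = D^4 \cup_{\Psi_K}(D^2 \times D^2)$, with $\Psi_K \co S^1 \times D^2 \hookrightarrow \partial D^4$ the attaching map of the $0$-framed $2$-handle, and let
\[
\Delta := D^2 \times \{\vec 0\} \;\subset\; D^2 \times D^2 \;\subset\; X_K
\]
be the core of the $2$-handle. Under the attaching identification, $\partial \Delta = \partial D^2 \times \{\vec 0\}$ is glued to $\Psi_K(S^1 \times \{\vec 0\}) = K$, so $\Delta$ is a smoothly embedded disk in $X_K$ with $\partial \Delta = K$ sitting on the hypersurface $\partial D^4 \subset X_K$ and otherwise disjoint from it.

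Next, I transfer this picture through the quotient map $\pi \co X_K \to Z_K$. The identifications defining $Z_K$ take place entirely on $\partial X_K = S^3_0(K)$, while both the $0$-handle $D^4 \subset X_K$ and the core $\Delta$ are disjoint from $\partial X_K$; consequently $\pi$ restricts to smooth embeddings on each of them. Setting $B := \pi(D^4)$ and $D := \pi(\Delta)$, I obtain a smoothly embedded $4$-ball $B \subset Z_K$ whose boundary $3$-sphere contains $K$, together with a smoothly embedded disk $D \subset Z_K \smallsetminus \intt(B)$ with $\partial D = K \subset \partial B$. This is exactly the definition of $K$ being slice in $Z_K - B^4$, i.e.\ slice in the closed manifold $Z_K$.

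There is no real obstacle to surmount here: once one recognizes that the locus of the Kawauchi self-gluing is disjoint from both the interior $0$-handle and the core of the $2$-handle, the statement is essentially tautological, amounting to the observation that $S^3 = \partial D^4$ is still an embedded $3$-sphere in $Z_K$ that bounds a $4$-ball on one side and contains the boundary of a disk on the other. The substantive content of Kawauchi's theorem---and of the remaining steps needed for Theorem \ref{thm: main}---lies elsewhere, namely in identifying $Z_K$ up to diffeomorphism and in verifying that it is a rational homology $4$-sphere.
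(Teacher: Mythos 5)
Your identification of the slice disk (the core of the $2$-handle) is the same as the paper's, but there is a genuine gap in the other half of the argument: the claim that ``the $0$-handle $D^4 \subset X_K$ \dots [is] disjoint from $\partial X_K$'' is false. The boundary of $X_K$ is $S^3_0(K) = \bigl(S^3 - \intt \Psi_K(S^1 \times D^2)\bigr) \cup (D^2 \times S^1)$, and the first piece is literally a subset of $\partial D^4$. Since $\phi_K$ carries $S^3 - \intt \Psi_K(S^1 \times D^2)$ to itself (it is just $\phi$ there), the quotient map $\pi$ identifies pairs of points of $\partial D^4$, so $\pi|_{D^4}$ is not injective and $B := \pi(D^4)$ is not an embedded $4$-ball with boundary $S^3$ containing $K$. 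Your argument for the core $\Delta$ is fine ($\Delta$, including $\partial\Delta = K$, lies in the interior of $X_K$), but without an embedded $4$-ball whose boundary is a genuine copy of $(S^3, K)$, you have not exhibited $K$ as slice in $Z_K - B^4$ in the sense defined in Section \ref{sec: intro}.

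The paper repairs exactly this point with a collar trick: form $X'_K = X_K \cup \bigl(S^3_0(K) \times [0,1]\bigr)$, attached along $S^3_0(K) \times \{1\}$, and perform the self-gluing by $\phi_K$ along the outer end $S^3_0(K) \times \{0\}$. Then $X'_K \cong X_K$ and $Z'_K \cong Z_K$, but now the original $0$-handle (indeed all of the original $X_K$) is disjoint from the gluing locus, so its image really is an embedded closed $4$-ball $B$, and the core of the $2$-handle gives the required disk in $Z'_K - \intt(B)$. Your proposal becomes correct once you insert this collar (or, equivalently, shrink the $0$-handle into the interior and extend the disk by a small annulus); as written, the key embedding claim fails.
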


\begin{proof}
Let $X'_K$ be the union of $X_K$ with an exterior collar $S^3_0(K) \times [0,1]$, attached along $S^3_0(K) \times \{1\}$, and let $Z'_K$ be the quotient of $X'_K$ by self-gluing by $\phi_K$ along $S^3_0(K) \times \{0\}$. Then clearly $X'_K \cong X_K$ and $Z'_K \cong Z_K$. Let $B \subset Z_K'$ denote the $0$-handle of $X_K$, which is still an embedded closed $4$-ball even after the gluing thanks to the collar. Let $V_K = Z_K' - \intt(B)$; there is a natural identification $\partial V_K = S^3$. Then $K$ bounds an embedded disk in $V_K$, namely the core of the $2$-handle of $X_K$. Thus, $K$ is slice in $Z_K$.
\end{proof}

\begin{remark}
In \cite{KawauchiRational}, Kawauchi considers the more general case of a strongly negative amphichiral knot $K$ in an arbitrary rational homology sphere $Y$, not just in $S^3$. He first considers $Y_0(K) \times [0,1]/ {\sim}$, where $(x,0) \sim (\phi_K(x),0)$, and proves that this is a rational homology $S^1 \times D^3$ bounded by $Y_0(K)$. Adding a $2$-handle along the meridian of $K$ then produces a rational homology ball bounded by $Y$, in which $K$ is slice. In the case where $Y=S^3$, this agrees with the description of $V_K$ in the previous paragraph.
\end{remark}

Let $DX_K$ denote the double of $X_K$: $DX_K = X_K \sqcup \overline{X_K}/{\sim}$, where the two copies are identified by the identity map of $S^3_0(K)$. This manifold acquires an orientation from that of $X_K$. Since $DX_K$ is the union of two simply-connected spaces along a connected intersection, it is simply-connected. Indeed, because $X_K$ is built with only a $0$- and $2$-handle (with even framing), it is well-known that $DX_K \cong S^2 \times S^2$, irrespective of $K$. (See, e.g., \cite[Corollary 5.1.6]{GompfStipsicz}.)

Let $\Pi_K \co DX_K \to Z_K$ be defined by $\pi$ on $X_K$ and by $\pi \circ \phi_K$ on $\overline {X_K}$. It is easy to see that $\Pi_K$ is a $2:1$ covering map, and hence it is the universal cover of $Z_K$. There is a nontrivial deck transformation $\tau_K \co DX_K \to DX_K$ that interchanges the two copies $X_K$ and $\overline{X_K}$ using $\Phi_K$. Using this covering map, we can deduce the algebraic topology of $Z_K$, as follows.

\begin{lemma} \label{lemma: ZK-algtop}
The manifold $Z_K$ is a rational homology $4$-sphere and has $\pi_1(Z_K) \cong H_1(Z_K) \cong H_2(Z_K) \cong  \Z_2$.
\end{lemma}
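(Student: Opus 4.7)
The plan is to use the universal covering $\Pi_K\co DX_K \to Z_K$ from the paragraph preceding the lemma, together with the identification $DX_K \cong S^2 \times S^2$, and then apply elementary covering space theory, Euler characteristics, and Poincar\'e duality. Since $DX_K$ is simply connected and $\Pi_K$ is a $2:1$ covering with nontrivial deck transformation $\tau_K$, I immediately get $\pi_1(Z_K) \cong \Z_2$, and taking abelianization gives $H_1(Z_K;\Z) \cong \Z_2$. To apply Poincar\'e duality on $Z_K$ I first need it to be orientable; this amounts to checking that $\tau_K$ is orientation-preserving on $DX_K$. At a point $p$ in the first copy of $X_K \subset DX_K$, the map $\tau_K$ acts as $\Phi_K$ landing in the second copy $\overline{X_K}$; the orientation of $DX_K$ on the second copy is the reverse of the natural $X_K$-orientation, and $\Phi_K$ itself reverses the $X_K$-orientation, so the two sign flips cancel and $\tau_K$ preserves orientation. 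Hence $Z_K$ is a closed oriented $4$-manifold.

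For the rational homology, I use that a degree-$2$ cover satisfies $\chi(Z_K) = \chi(DX_K)/2 = 4/2 = 2$. Since $H_1(Z_K;\Z) \cong \Z_2$ is torsion, $b_1(Z_K;\Q) = 0$; Poincar\'e duality on $Z_K$ then gives $b_3(Z_K;\Q) = b_1(Z_K;\Q) = 0$. Combined with $b_0 = b_4 = 1$, the equation $\chi(Z_K) = 2$ forces $b_2(Z_K;\Q) = 0$, so $Z_K$ is a rational homology $4$-sphere.

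Finally, because $b_2(Z_K;\Q) = 0$, the group $H_2(Z_K;\Z)$ is entirely torsion, so $\mathrm{Hom}(H_2(Z_K;\Z),\Z) = 0$. The universal coefficient theorem then gives $H^2(Z_K;\Z) \cong \mathrm{Ext}(H_1(Z_K;\Z),\Z) \cong \mathrm{Ext}(\Z_2,\Z) \cong \Z_2$, and Poincar\'e duality yields $H_2(Z_K;\Z) \cong H^2(Z_K;\Z) \cong \Z_2$. The only step requiring a small geometric argument is the orientability check in the first paragraph; everything else is formal bookkeeping with the covering $\Pi_K$, so I do not anticipate a serious obstacle. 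Note in particular that I never need to identify the action of $\tau_K$ on $H_2(DX_K) = H_2(S^2 \times S^2)$ explicitly, which would otherwise be the main computational burden if one tried to compute $H_*(Z_K;\Q)$ directly via the transfer map.
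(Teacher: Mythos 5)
Your proof is correct and takes essentially the same route as the paper's: the two-sheeted simply connected cover gives $\pi_1(Z_K) \cong H_1(Z_K) \cong \Z_2$, the Euler characteristic $\chi(Z_K) = \chi(DX_K)/2 = 2$ forces $b_2 = 0$, and universal coefficients plus Poincar\'e duality identify $H_2(Z_K) \cong \Z_2$. The only cosmetic differences are that the paper routes the last step through $H^3(Z_K)$ rather than $H^2(Z_K)$, and it records the orientability of $Z_K$ already at the construction stage (since $\phi_K$ is orientation-reversing) rather than via your check that $\tau_K$ preserves the orientation of $DX_K$.
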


\begin{proof}
Since the universal cover of $Z_K$ is two-sheeted, we deduce that $\pi_1(Z_K) \cong H_1(Z_K) \cong \Z_2$ and hence $b_1(Z_K)=0$. The nontrivial element of $\pi_1(Z_K)$ can be given by any arc connecting two points in $S^3_0(K)$ that are exchanged by $\phi_K$.

To see that $Z_K$ is a rational homology sphere, we first note that $\chi(DX_K) = 2 \chi(X_K) - \chi(S^3_0(K) = 4$, and then $\chi(Z_K) = \chi(DX_K)/2 = 2$. Since $\chi(Z_K) = 2 - 2b_1(Z_K) + b_2(Z_K)$, we have $b_2(Z_K) = 0$. Universal coefficients and Poincar\'e duality then imply that $H_2(Z_K) \cong H^3(Z_K) \cong H_1(Z_K) \cong \Z_2$, as required.
\end{proof}

\begin{example}
Let $O$ denote the unknot; then $X_O \cong S^2 \times D^2$. To be explicit, let us identify $D^4$ with $D^2 \times D^2$, where the involution $\Phi$ is still given in coordinates by $\Phi(x_1, x_2,x_3,x_4) = (x_1,-x_2,-x_3, -x_4)$, and take $O$ to be $S^1 \times \{0\}$. The framing $\Psi_O$ is then just the inclusion of $S^1 \times D^2$. Then $X_O = (D^2 \times D^2) \cup (D^2 \times D^2)$, glued by the identity map of $S^1 \times D^2$. This is naturally identified as $(D^2 \cup_{S^1} D^2 ) \times D^2 = S^2 \times D^2$, and $S^3_0(O)$ is identified as $S^2 \times S^1$. By construction, $\Phi_O$ acts on each copy of $D^2 \times D^2$ by a reflection in the first factor and negation in the second. Thus, it acts on $S^2 \times D^2$ in the same fashion: a reflection $r \co S^2 \to S^2$ in the first factor and negation in the second factor.

Taking the double, we have $DX_O = S^2 \times (D^2 \cup _{S^1} D^2) = S^2 \times S^2$. The deck transformation $\tau_O$ acts by the reflection $r$ in the first factor, while interchanging the two copies of $D^2$ and negating in the second factor. That is, for $(x,y) \in S^2 \times S^2$, we have $\tau_O(x,y) = (r(x), -y)$. We thus see that $Z_O$ agrees with the construction of $Z_0$ in the introduction.
\end{example}

To prove Theorem \ref{thm: main}, we will use a $5$-dimensional argument (inspired by one of Mazur \cite{MazurContractible}) to show that the diffeomorphism $DX_K \cong S^2 \times S^2 = DX_O$ can be constructed equivariantly with respect to the deck transformations $\tau_K$ and $\tau_O$. Let $Q_K = X_K \times [-1,1]$. This is a $5$-manifold whose boundary is
\[
(X_K \times \{1\}) \cup (S^3_0(K) \times [-1,1]) \cup (X_K \times \{-1\}).
\]
Then $\partial Q(K)$ is naturally identified, after smoothing corners, with $DX_K$ (or, more precisely, with $DX_K'$ because of the collar). Define $\tilde \tau_K \co Q_K \to Q_K$ by $\tilde \tau_K(x,t) = (\Phi_K(x), -t)$. This is an involution of $Q_K$, and it restricts to $\tau_K$ on $\partial Q_K$. Continuing with the above example, we may identify $Q_O$ with $S^2 \times D^3$, where $\tilde \tau_O(x,y)  = (r(x), -y)$.

\begin{proposition} \label{prop: QK}
For any SNACK $K$, the pairs $(Q_K, \tilde \tau_K)$ and $(Q_O, \tilde \tau_O)$ are equivariantly diffeomorphic.
\end{proposition}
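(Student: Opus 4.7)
The plan is to realize both $(Q_K, \tilde{\tau}_K)$ and $(Q_O, \tilde{\tau}_O)$ as the same $5$-dimensional handlebody with equivariantly isotopic attaching data. After smoothing corners, $Q_K = X_K \times [-1,1]$ admits a $\tilde{\tau}_K$-invariant handle decomposition consisting of one $0$-handle $D^4 \times [-1,1] \cong D^5$ and one $2$-handle $(D^2 \times D^2) \times [-1,1] \cong D^2 \times D^3$. In the natural coordinates, the involution on the $0$-handle is the orthogonal reflection
\[
\tilde{\Phi}(x_1, x_2, x_3, x_4, x_5) = (x_1, -x_2, -x_3, -x_4, -x_5),
\]
and the involution on the $2$-handle is a reflection on the $D^2$-factor together with $-\id$ on the $D^3$-factor. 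The $2$-handle is attached along a $\tilde{\Phi}$-equivariant framed embedding whose core $S^1$ is $K \subset S^3 \times \{0\} \subset S^4 = \partial D^5$. The same description applies to $Q_O$ with $O$ replacing $K$. It therefore suffices to produce a $\tilde{\Phi}$-equivariant ambient isotopy of $S^4$ carrying the framed knot $K$ to the framed unknot $O$.

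For the equivariant unknotting, note that the fixed set of $\tilde{\Phi}$ on $S^4$ is the pair $\{N, S\} = \{(\pm 1, 0, 0, 0, 0)\}$, and both $K$ and $O$ pass through these two points. Write $K = K_+ \cup K_-$ for the two arcs from $N$ to $S$ interchanged by $\phi$. The idea is to push $K_+$ into the upper hemisphere $D^4_+ = \{x_5 \geq 0\} \cap S^4$ and, by symmetry, $K_-$ into the lower hemisphere $D^4_-$, then straighten non-equivariantly in one hemisphere and reflect. Construct a $\tilde{\Phi}$-equivariant smooth vector field $V$ on $S^4$ that vanishes at $N, S$ and whose $x_5$-component is strictly positive along the interior of $K_+$ (and hence, by equivariance, strictly negative along $K_-$); such a $V$ is obtained by averaging the tangential projection of $\partial_{x_5}$ against a bump function supported in a small neighborhood of $K_+$ disjoint from $K_-$. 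The time-$\epsilon$ flow of $V$ is an equivariant ambient isotopy pushing $K_+$ into $\{x_5 > 0\}$ and $K_-$ into $\{x_5 < 0\}$. The pushed $K_+$ is an embedded arc in the $4$-ball $D^4_+$ from $N$ to $S$ with endpoints on $\partial D^4_+$; by codimension-$3$ general position, it is ambiently isotopic rel boundary to the standard meridional arc $\{(\cos\theta, 0, 0, 0, \sin\theta) : \theta \in [0, \pi]\}$. Reflecting this isotopy via $\tilde{\Phi}$ on $D^4_-$ yields a $\tilde{\Phi}$-equivariant ambient isotopy of $S^4$ taking $K$ to the round circle $\{(\cos\theta, 0, 0, 0, \sin\theta) : \theta \in [0, 2\pi]\}$. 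Finally, a rotation in $SO(4)$ acting on $(x_2, x_3, x_4, x_5)$, which commutes with $\tilde{\Phi}$ restricted to those coordinates (namely $-\id$), carries this round circle to $O = \{(\cos\theta, \sin\theta, 0, 0, 0)\}$; any $\Z_2$-valued framing discrepancy can be absorbed by an equivariant Dehn-type twist along the normal $D^3$-bundle supported away from the fixed points.

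For the final assembly, use an equivariant collar of $\partial D^5 = S^4$ in $D^5$ together with the equivariant isotopy extension theorem to lift the ambient isotopy of $S^4$ to a $\tilde{\Phi}$-equivariant self-diffeomorphism of $D^5$ that carries the attaching region of the $2$-handle of $Q_K$ onto that of $Q_O$; then glue with the identity on the equivariantly standard $2$-handles. The hard part is the equivariant unknotting step of the middle paragraph: the essential observation is that $\tilde{\Phi}$ negates the extra coordinate $x_5$ gained by the inclusion $S^3 \hookrightarrow S^4$, giving precisely enough freedom to equivariantly split $K$ into its two $\phi$-halves, push them into opposite hemispheres, and thereby reduce the equivariant problem to a single ordinary codimension-$3$ isotopy of arcs in a $4$-ball. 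The remaining technical items are the careful matching of framings and the equivariant corner smoothing near the fixed set, which are routine but warrant explicit verification.
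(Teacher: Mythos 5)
Your route to the equivariant isotopy is genuinely different from the paper's, and it is an attractive idea: instead of invoking the Boyle--Chen equivariant homotopy (an isotopy except for pairs of simultaneous crossing changes) and then using the $x_5$ direction to convert those crossing changes into isotopies, you use the $x_5$ direction at the outset to separate the two $\phi$-halves of $K$ into opposite hemispheres, reducing the whole problem to the standard, non-equivariant unknotting of an arc in $D^4$ rel boundary. Modulo some care near the two fixed points $N$ and $S$ --- a neighborhood of $K_+$ cannot literally be disjoint from $K_-$, since the two arcs share their endpoints, so you should first standardize $K$ near $N$ and $S$ and carry out the pushing and the arc-straightening rel those neighborhoods --- this part of your argument is sound and arguably more self-contained than the published one.

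The genuine gap is in the framing step. You assert that any $\Z_2$-valued framing discrepancy ``can be absorbed by an equivariant Dehn-type twist along the normal $D^3$-bundle.'' This cannot work: the two framings of a circle in $S^4$ produce non-diffeomorphic $2$-handlebodies ($S^2 \times D^3$ versus the twisted $D^3$-bundle over $S^2$), whose boundaries $S^2 \times S^2$ and $\CP^2 \conn \CPbar^2$ are distinguished by their intersection forms; so if the framings disagreed, $Q_K$ and $Q_O$ would simply fail to be diffeomorphic, and no twist could repair that. Concretely, the Dehn twist of $S^1 \times D^3$ by the nontrivial loop in $SO(3)$ does not extend over the complementary $D^2 \times S^2 \subset S^4$, precisely because that loop is not null-homotopic in the identity component of $\mathrm{Diff}(S^2) \simeq SO(3)$. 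What is actually needed is a proof that the framings \emph{do} agree. The paper gets this by observing that $\partial Q_K \cong DX_K$ and $\partial Q_O \cong DX_O$ are both diffeomorphic to $S^2 \times S^2$, since the double of any $0$-framed knot trace is $S^2 \times S^2$ (see \cite[Corollary 5.1.6]{GompfStipsicz}); this forces the induced framing to be the even one in both cases. You need to add this observation, or an equivalent computation of the mod-$2$ framing, to close the argument.
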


\begin{proof}
Note that $Q_K$ has a $5$-dimensional handle structure consisting of one $0$-handle and one $2$-handle, each of which is the product of the corresponding handle of $X_K$ with an interval, and the involution $\tilde \tau_K$ preserves this handle structure. After smoothing corners, we may identify the $0$-handle of $Q_K$ with $D^5$, and the $2$-handle with $D^2 \times D^3$, so that the involution $\tilde \tau_K$ is given on $D^5$ by
\[
\tilde \tau_K|_{D^5}(x_1, x_2, x_3, x_4, x_5) = (x_1, -x_2, -x_3, -x_4, -x_5).
\]
Since this is independent of $K$, we will omit the $K$ subscript denote this map by $\tilde\tau$. The attaching circle for the $2$-handle is $K \times \{0\}$, where we identify $S^3$ with $\partial D^5 \cap \{x_5 = 0\}$. The gluing map is an inclusion of $S^1 \times D^3$ into $\partial D^5$, parametrizing a $\tilde \tau$-invariant neighborhood of $K \times \{0\}$. We may likewise view the attaching circle for the 2-handle of $Q_O$, $O \times \{0\}$, as living in this same manifold.

By a theorem of Boyle and Chen \cite[Proposition 3.12]{BoyleChenHalf}, there is a homotopy from $K$ to $O$, equivariant with respect to our original involution $\phi \co S^3 \to S^3$, which is an isotopy except for finitely many pairs of simultaneous crossing changes. By slightly perturbing this in the $x_5$ direction, we may promote this to a $\tilde \tau$-equivariant isotopy taking $K \times \{0\}$ to $O \times \{0\}$ in $S^4$.

By the equivariant isotopy extension theorem (see, e.g., \cite[Theorem 8.6]{Kankaanrinta}), we may then find an equivariant ambient isotopy of $S^4$ taking $K \times \{0\}$ to $O \times \{0\}$. Under this isotopy, the framing of $K \times \{0\}$ used to define $Q_K$ induces a framing of $O \times \{0\}$, which \emph{a priori} may or may not agree with the framing of $O \times \{0\}$ used to define $Q_O$. However, note that a circle in $S^4$ only has two framings, which are distinguished by their surgeries: one framing yields $S^2 \times S^2$, while the other framing yields $S^2 \widetilde{\times} S^2 = \CP^2 \conn \CPbar^2$. Since we have already established that both $Q_K$ and $Q_O$ have boundary diffeomorphic to $S^2 \times S^2$, we deduce that the isotopy does indeed take the preferred framing of $K \times \{0\}$ to that of $O \times \{0\}$. Thus, the isotopy extends to an equivariant diffeomorphism from $Q_K$ to $Q_O$, as required.
\end{proof}

\begin{proof}[Proof of Theorem \ref{thm: main}]
Restricting the diffeomorphism from Proposition \ref{prop: QK} to the boundary gives an equivariant diffeomorphism $(DX_K,\tau_K) \cong (DX_O, \tau_O)$, and hence a diffeomorphism between the quotients, $Z_K \cong Z_O$. Thus, $K$ is slice in $Z_O$.
\end{proof}

\section*{Acknowledgements}

The author first learned the fact that the figure-eight knot is rationally slice from Tim Cochran, who very tactfully corrected an incorrect statement that the author made during a conference talk in Busan, South Korea, in 2014. This paper is dedicated to Tim's memory. The author is also grateful to Keegan Boyle, Jen Hom, Tye Lidman, JungHwan Park, Lisa Piccirillo, and Danny Ruberman for helpful conversations.

\bibliography{bibliography}
\bibliographystyle{amsalpha}

\end{document}